\newtheorem*{rep@theorem}{\rep@title}
\newcommand{\newreptheorem}[2]{%
\newenvironment{rep#1}[1]{%
 \def\rep@title{#2 \ref{##1}}%
 \begin{rep@theorem}}%
 {\end{rep@theorem}}}
\newtheorem{theorem}{Theorem}
\newtheorem{corollary}{Corollary}
\newtheorem{remark}{Remark}
\newtheorem{proposition}{Proposition}
\newcommand{\Z}{{\mathbb Z}}
\newcommand{\F}{{\mathbb F}}
\newcommand{\NC}{\textrm{NC}}
\newcommand{\ceil}[1]{\left \lceil #1 \right \rceil}
\newcommand{\card}[1]{\# \left( #1 \right)}
\newcommand{\rmv}[1]{}
\let\oldmarginpar\marginpar
\renewcommand\marginpar[1]{\-\oldmarginpar[\raggedleft\footnotesize\textcolor{blue}{#1}]%
{\raggedright\footnotesize\textcolor{blue}{#1}}}
\title{Counting Value Sets: Algorithm and Complexity}
\author{Qi Cheng\thanks{School of Computer Science,
The University of Oklahoma, Norman, OK 73019, USA. Email: {\tt
qcheng@cs.ou.edu}. Partially supported by NSF.} \and
Joshua E. Hill \thanks{ Department of Mathematics, University of
California, Irvine, CA 92697, USA. Email: {\tt hillje@math.uci.edu}.
Partially supported by NSF.}
\and Daqing Wan\thanks{ Department of Mathematics, University of
California, Irvine, CA 92697, USA. Email: {\tt dwan@math.uci.edu}.
Partially supported by NSF.} }
\begin{document}

\maketitle

\begin{abstract}
%uni-variate polynomial
Let $p$ be a prime.  Given a polynomial in $\F_{p^m}[x]$ of degree
$d$ over the finite field $\F_{p^m}$, one can view it as a map from $\F_{p^m}$ to $\F_{p^m}$, and
examine the image of this map, also known as the value set.
In this paper,  we present the first non-trivial algorithm and the first complexity
result on computing the cardinality of
this value set.  We show an elementary connection between this
cardinality and the number of points on a family of varieties in
affine space.  We then apply Lauder and Wan's $p$-adic point-counting
algorithm to count these points, resulting in a non-trivial algorithm
for calculating the cardinality of the value set.  The running time of our algorithm
is $(pmd)^{O(d)}$.
%polynomial in both $p$ and $m$, but is exponential in $d$.  
In particular, this
is a polynomial time algorithm for fixed $d$ if $p$ is reasonably small.
We also
show that the problem is \#P-hard when the polynomial is given in a
sparse representation, $p=2$, and $m$ is allowed to vary, or when the
polynomial is given as a straight-line program, $m=1$ and $p$ is
allowed to vary.  Additionally, we prove that it is NP-hard to decide
whether a polynomial represented by a straight-line program has a root
in a prime-order finite field, thus resolving an open problem proposed by
Kaltofen and Koiran in \cite{Kaltofen03,KaltofenKo05}.
\end{abstract}

\newpage

\section{Introduction}

In a finite field with $q = p^m$ ($p$ prime) elements, $\F_q$, take a polynomial, $f \in \F_q \left[ x \right]$
with degree $d>0$.  Denote the image set of this polynomial as
\[ V_f = \left\{ f \left( \alpha \right) \mid \alpha \in \F_q \right\} \]
and denote the cardinality of this set as $\card{V_f}$.

There are a few trivial bounds that can be immediately established.
There are only $q$ elements in the field, so $\card{V_f} \leq q$.
Additionally, any polynomial of degree $d$ can have at most $d$ roots,
thus for all $a \in V_f$, $f(x) = a$ is satisfied at most $d$ times.
This is true for every element in $V_f$, so $\card{V_f} d \geq q$,
whence
\[ \ceil{\frac{q}{d}} \leq \card{V_f} \leq q \]
(where $\ceil{\cdot}$ is the ceiling function).

Both of these bounds can be achieved: if $\card{V_f} = q$, then $f$ is called a permutation polynomial and if $\card{V_f} = \ceil{\frac{q}{d}}$, then $f$ is said to have a ``minimal value set''.

The problem of establishing $\card{V_f}$ has been studied in various forms for at least the last 115 years, but exact formulations for $\card{V_f}$ are known only for polynomials in very specific forms.  Results that apply to general polynomials are asymptotic in nature, or provide estimates whose errors have reasonable bounds only on average \cite{MR1334624}.

The fundamental problem of counting the value set cardinality $\card{V_f}$
can be thought of as a much more general version of the problem of determining if a particular polynomial is a permutation polynomial.
Shparlinski  \cite{MR1190826} provided a baby-step giant-step type test that determines if a given polynomial is a permutation polynomial by extending \cite{MR1094533} to an algorithm that runs in $\tilde{O} ( (dq)^{6/7} )$. This is still fully exponential in $\log q$.
Ma and von zur Gathen \cite{MR1319494} provide
a ZPP (zero-error probabilistic polynomial time) algorithm for testing if a given polynomial is a permutation polynomial. According to \cite{Kayal05},
the first deterministic polynomial time algorithm for testing permutation polynomials is obtained by Lenstra using the classification of
exceptional polynomials which in turn depends on the classification of finite simple groups.  Subsequently, an elementary approach based on the
Gao-Kaltofen-Lauder factorization algorithm is given by Kayal \cite{Kayal05}.

For the more general problem of exactly computing $\card{V_f}$, essentially nothing is known about this problem's complexity and no non-trivial algorithms are known.  For instance, no baby-step giant-step type algorithm is known in computing $\card{V_f}$. No probabilistic polynomial time algorithm is
known. Finding a non-trivial algorithm and proving a non-trivial complexity result for the value counting were raised as open problems in \cite{MR1319494},
where a probabilistic approximation algorithm is given.
In this paper, we provide the first non-trivial algorithm and the first non-trivial complexity result for the exact counting of the
value set problem.

\subsection{Our results}

Perhaps the most obvious method to calculate $\card{V_f}$ is to
evaluate the polynomial at each point in $\F_q$ and count how
many distinct images result. This algorithm has a  time
and space complexity $ (d q)^{O(1)}  $.  One can also approach this problem by operating on points in the co-domain.  One has $f(x) = a$ for some $x \in \F_q$ if and only if $f_a(X) = f(X) - a$ has a zero in $\F_q$;
this algorithm again has a time complexity
$ (d q)^{O(1)}  $, but the space complexity is improved considerably
to $ (d\log q)^{O(1)}  $.

In this paper we present several results on determining the cardinality of
value sets. On the algorithmic side, we show an elementary connection between this
cardinality and the number of points on a family of varieties in
affine space.  We then apply the Lauder-Wan $p$-adic point-counting algorithm\cite{LauderWan2008countingpoints}, resulting in a non-trivial algorithm for calculating the image set
cardinality in the case that $p$ is sufficiently small (i.e., $p = O((d \log q)^C)$ for some positive constant $C$).  Precisely, we have

\begin{reptheorem}{thm:imagepointcounting}
There exists an explicit deterministic algorithm and an explicit polynomial $R$ such that for any $f \in \F_q [x]$ of degree $d$, where $q = p^m$ ($p$ prime), the algorithm computes the cardinality of the image set, $\card{V_f}$, in a number of bit operations bounded by $R \left( m^d d^d p^d \right)$.
\end{reptheorem}

The
running time of this algorithm is polynomial in both $p$ and $m$, but is
exponential in $d$. In particular, this is a polynomial time algorithm for fixed $d$ if the characteristic $p$
is small ($q=p^m$ can be large).

On the complexity side, we have several hardness results on the value set problem.
With a field of characteristic $p=2$, we have

\begin{reptheorem}{thm:binaryfieldvaluesetcounting}
The problem
of counting the value set of a sparse polynomial over a finite field of characteristic $p=2$
is \#P-hard.
\end{reptheorem}

The idea of our proof of this theorem is to reduce the problem of counting
satisfying assignments for a 3SAT formula to the problem of value set counting.
%through an $\NC_5^0$ circuit.

Over a prime-order finite field, we have

\begin{reptheorem}{thm:primeorderfieldvaluesetcounting}
Over a prime-order finite field $\F_p$,  the problem of counting the value set is \#P-hard under RP-reduction,
if the polynomial is given as a straight-line program.
\end{reptheorem}

Additionally, we prove that it is NP-hard to decide whether
a polynomial in $\Z[x]$ represented by a straight-line program has
a root in a prime-order finite field, thus
resolving an  open problem proposed in \cite{Kaltofen03,KaltofenKo05}.
We accomplish the complexity results over prime-order finite fields
by reducing the prime-order finite field subset sum problem (\textsc{PFFSSP})
to these problems.

In the \textsc{PFFSSP},
given a prime $p$, an integer $b$ and
a set of integers $S = \{a_1, a_2, \cdots, a_t \}$,
we want to decide the solvability of the equation
$$ a_1 x_1 + a_2 x_2 + \cdots + a_t x_t \equiv b  \pmod{p}$$
with $x_i \in \{ 0, 1\}$ for $1\leq i\leq t$.
The main idea comes from the observation that
if $t < \log p/3$,  there is
a sparse polynomial $\alpha (x) \in \F_p[x] $ such that
as $x$ runs over $\F_p$,
the vector
$$ (\alpha(x), \alpha (x+1), \cdots, \alpha (x+t-1))  $$
runs over all the elements in $\{0, 1\}^t  $. In fact, a lightly modified version of the quadratic character
$\alpha(x) = (x^{(p-1)/2} + x^{p-1})/2$ suffices. So
the \textsc{PFFSSP} can be reduced to deciding whether the shift sparse polynomial
$ \sum_{i=0}^{t-1} a_{i+1} \alpha (x+i) - b = 0  $
has a solution in $\F_p$.

\section{Background}

\subsection{The subset sum problem}

To prove the complexity results, we use the subset sum problem (\textsc{SSP}) extensively.
The \textsc{SSP} is a well-known problem in computer science.
In one instance of the \textsc{SSP}, given an integer $b$ and a set of positive integers
$S = \{a_1, a_2, \cdots, a_t \}  $,
\begin{enumerate}
\item (Decision version) the goal is to decide whether
there exists a subset $T\subseteq S $
such that the sum of all the integers in $T$ equals $b$,
\item (Search version)
the goal is to find a subset $T\subseteq S $
such that the sum of all the integers in $T$ equals $b$,
\item (Counting version)
the goal is to count the number of subsets $T\subseteq S $
such that the sum of all the integers in $T$ equals $b$.
\end{enumerate}
The decision version of  the \textsc{SSP} is a classical NP-complete problem.
The counting version of the \textsc{SSP} is \#P-complete,
which can be easily derived from proofs of the NP-completeness
of the decision version, e.g. \cite[Theorem 34.15]{CormenLe01}.

One can view the \textsc{SSP} as a problem of solving the linear equation
          $$ a_1 x_1 + a_2 x_2 + \cdots + a_t x_t = b  $$
with $x_i \in \{ 0, 1\}$ for $1\leq i\leq t$.
The prime-order finite field subset sum problem is a similar problem
where in addition to $b$
and $S$, one is given a prime $p$, and the goal is to decide
the solvability of the equation
$$ a_1 x_1 + a_2 x_2 + \cdots + a_t x_t \equiv b  \pmod{p}$$
with $x_i \in \{ 0, 1\}$ for $1\leq i\leq t$.

\begin{proposition}
The prime-order finite field subset sum problem is NP-hard under RP-reduction.
\end{proposition}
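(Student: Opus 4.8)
The plan is to reduce the ordinary (integer) decision \textsc{SSP}, which is NP-complete, to the \textsc{PFFSSP}. Given an instance of the \textsc{SSP} consisting of a target $b$ and positive integers $S=\{a_1,\dots,a_t\}$, put $A=a_1+\cdots+a_t$. We may assume $0\le b\le A$, since otherwise the instance is unsatisfiable and we can output a fixed unsatisfiable \textsc{PFFSSP} instance. The key observation is the standard one for modular subset sum: if $p>A$, then because every value $\sum_{i=1}^t a_i x_i$ with $x_i\in\{0,1\}$ lies in $[0,A]\subseteq[0,p-1]$, and also $b\in[0,p-1]$, the congruence $a_1 x_1+\cdots+a_t x_t\equiv b\pmod p$ holds if and only if the integer equation $a_1 x_1+\cdots+a_t x_t = b$ holds. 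Hence for \emph{any} prime $p>A$, the \textsc{PFFSSP} instance $(p,b,S)$ is a yes-instance precisely when the original \textsc{SSP} instance is.

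It then remains to produce such a prime. By Bertrand's postulate there is a prime $p$ with $A<p\le 2A$, and $\log(2A)$ is polynomial in the input size, so one can find such a prime in randomized polynomial time: sample integers uniformly from $(A,2A]$ and test each for primality (by AKS, or, since we are already in the randomized setting, by a Monte Carlo test). Since the density of primes in this range is $\Omega(1/\log A)$, $O(\log^2 A)$ independent samples find a prime except with probability at most $2^{-t}$; on the (rare) event that no prime is found we again output a fixed unsatisfiable \textsc{PFFSSP} instance. The whole procedure runs in time polynomial in the input size.

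This yields an RP-reduction in the required one-sided sense: if the \textsc{SSP} instance is unsatisfiable then the output \textsc{PFFSSP} instance is unsatisfiable with probability $1$, whereas if the \textsc{SSP} instance is satisfiable then the output is satisfiable whenever the prime search succeeds, which occurs with probability at least $1/2$ and can be amplified by repetition. Combined with the NP-completeness of the decision \textsc{SSP}, this shows the \textsc{PFFSSP} is NP-hard under RP-reduction.

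The only genuinely nontrivial ingredient is the prime-generation step, and it is also the reason the reduction is randomized rather than deterministic: no deterministic polynomial-time algorithm is known for producing a prime of prescribed bit-length (equivalently, for finding a prime in $(A,2A]$ without essentially exhaustive search). Everything else — the range bound and the equivalence of the congruence with the integer equation — is elementary. Indeed, if one allowed an arbitrary (not necessarily prime) modulus one could simply take $p=A+1$ and obtain a deterministic reduction; the primality requirement on the modulus is exactly what forces the use of randomness here.
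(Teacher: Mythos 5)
Your proposal is correct and follows exactly the same approach as the paper: reduce from the integer \textsc{SSP} by choosing a prime $p > \sum_i a_i$ (found in randomized polynomial time), so that the congruence coincides with the integer equation. The paper states this in one sentence; you have merely filled in the routine details (Bertrand's postulate, sampling, one-sidedness of the error), all of which are fine.
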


\begin{proof}
To reduce the subset sum problem to the prime-order finite field subset sum problem,
one finds a prime $p > \sum_{i=1}^t a_i $, which can be done
in randomized polynomial time.
\end{proof}

\begin{remark}
To make the reduction deterministic,
one needs to de-randomize the problem of finding a large prime,
which appears to be  hard \cite{TaoCr11}.
\end{remark}

\subsection{Polynomial representations}

There are different ways to represent a polynomial over a field $\F$.
The dense representation lists all the coefficients
of a polynomial, including the zero coefficients.
The sparse representation lists only the nonzero coefficients,
along with the degrees of the corresponding terms.
If most of the coefficients of a polynomial are zero,
then the sparse representation is much shorter than the
dense representation.
A sparse shift representation of a polynomial
in $\F[x]$ is a list of $n$ triples
$(a_i, b_i, e_i) \in \F \times \F \times \Z_{\geq 0}$ which
represents the polynomial $$ \sum_{1\leq i \leq n} a_i (x + b_i)^{e_i}. $$

More generally, a straight-line program for a univariate polynomial
in $\Z[x]$ or $\F_p [x]$ is
a sequence of assignments,
starting from $x_1 = 1$ and $x_2 = x$.
After that, the $i$-th assignment has the form
$$x_i = x_j \odot x_k$$
where $0\leq j, k < i$
and $\odot$ is one of the three operations $+, -, \times$.
We first let $\alpha$ be an element in $\F_{p^m}$ such that
$\F_{p^m} = \F_p [\alpha]$.
A straight-line program for a univariate polynomial
in $\F_{p^m} [x]$ can be defined similarly, except that
the sequence starts from
$x_1 = \alpha $ and $x_2 = x$.
One can verify that a straight-line program computes a univariate
polynomial,
and that sparse polynomials and sparse shift polynomials
have short straight-line programs.
A polynomial produced by a short straight-line program
may have very high degree,
and most of its coefficients may be nonzero,
so it may be costly to write it in either a dense form
or a sparse form.

\section{Hardness of solving straight-line polynomials}

%\marginpar{\cite{KipnisSh99} deals with finding the solutions, not counting them.  A few words on the connection between these problems?}
It is known that deciding whether there is
a root in a finite field extension
for a sparse polynomial is NP-hard \cite{KipnisSh99}.
In a related work, it was shown that deciding whether there is
a $p$-adic rational root  for a sparse polynomial is
NP-hard \cite{AvendanoIb10}.
However, the complexity of deciding the solvability
of a straight-line polynomial in $\Z[x]$ 
within a prime-order finite field 
was not known.
This open problem was proposed in \cite{Kaltofen03} and
\cite{KaltofenKo05}.
We resolve this problem within this section, and this same idea will be used
later on to prove the hardness result of the value set
counting problem.

%\section{The Reduction}

Let $p$ be an odd prime. Let $\chi$ be the quadratic character modulo $p$,
namely $\chi(x) $ equals $1, -1, \text{ or } 0$, depending on whether $x$
is a quadratic residue, a quadratic non-residue, or is congruent to $0$ modulo $p$.
For $x\in \F_p$, $\chi (x)= x^{(p-1)/2}$. Consider the list
\begin{equation}\label{complete}
 \chi(1), \chi(2), \cdots, \chi(p-1).
\end{equation}
It is a sequence in $\{1,-1\}^{p-1}$.
The following bound is a standard consequence of the celebrated Weil bound for character sums, see
\cite{peralta92} for a detailed proof.

\begin{proposition}
Let $(b_1, b_2, \cdots, b_t)$ be a sequence in $\{ 1,-1 \}^t$. Then
the number of $x \in \F_p$ such that
$$ \chi (x) =b_1, \chi(x+1) = b_2, \cdots, \chi(x+t-1) = b_t  $$
is in the range $ ( p/2^t - t(3+\sqrt{p}), p/2^t + t(3+\sqrt{p}))$.
\end{proposition}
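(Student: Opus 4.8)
The plan is to encode the quantity $N$ we want to count as a character sum and control it via the Weil bound. The starting point is the elementary indicator identity: for $b\in\{1,-1\}$ the quantity $\tfrac{1+b\,\chi(y)}{2}$ equals $1$ if $\chi(y)=b$, equals $0$ if $\chi(y)=-b$, and equals $\tfrac12$ if $y\equiv 0\pmod p$. Hence for every $x\in\F_p$ that avoids the at most $t$ ``bad'' residues $0,-1,\dots,-(t-1)$, the product $\prod_{i=1}^{t}\tfrac{1+b_i\chi(x+i-1)}{2}$ equals exactly the indicator of the event ``$\chi(x+i-1)=b_i$ for all $i$'', whereas at each bad $x$ the true indicator is $0$ while the product lies in $[0,\tfrac12]$ (the factor whose argument vanishes contributes $\tfrac12$, and every other factor contributes $0$, $\tfrac12$, or $1$). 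Writing $M=\sum_{x\in\F_p}\prod_{i=1}^{t}\tfrac{1+b_i\chi(x+i-1)}{2}$, and noting there are at most $t$ bad residues, this gives $M-\tfrac t2\le N\le M$, so it suffices to estimate $M$.

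Next I would expand the product multiplicatively over subsets $S\subseteq\{1,2,\dots,t\}$:
\[
M=\frac{1}{2^{t}}\sum_{S\subseteq\{1,\dots,t\}}\Bigl(\prod_{i\in S}b_i\Bigr)\sum_{x\in\F_p}\chi\!\Bigl(\prod_{i\in S}(x+i-1)\Bigr).
\]
The term $S=\emptyset$ contributes exactly $p/2^{t}$. For $S\neq\emptyset$, the polynomial $g_S(x)=\prod_{i\in S}(x+i-1)$ is squarefree of degree $|S|\le t$ (its roots $1-i$ for $i\in S$ are distinct), so it is not a constant multiple of a square; the Weil bound for character sums then gives $\bigl|\sum_{x\in\F_p}\chi(g_S(x))\bigr|\le(|S|-1)\sqrt p$. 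This is precisely the estimate whose detailed proof is cited from \cite{peralta92}, and it is the only non-elementary input.

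Finally I would collect the error terms. Using the identity $\sum_{k=1}^{t}\binom{t}{k}(k-1)=t2^{t-1}-2^{t}+1$, summing the Weil contributions over all nonempty $S$ gives
\[
\Bigl|M-\frac{p}{2^{t}}\Bigr|\le\frac{\sqrt p}{2^{t}}\sum_{k=1}^{t}\binom{t}{k}(k-1)<\frac{t\sqrt p}{2},
\]
and combining this with $M-\tfrac t2\le N\le M$ yields $\bigl|N-p/2^{t}\bigr|<\tfrac t2\bigl(1+\sqrt p\bigr)$, which sits comfortably inside the claimed interval $\bigl(p/2^{t}-t(3+\sqrt p),\,p/2^{t}+t(3+\sqrt p)\bigr)$. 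The one step that needs genuine care is the bookkeeping at the $O(t)$ points where some $x+i-1$ vanishes, where the character-sum model overcounts; but since the product there never exceeds $\tfrac12$, this correction is absorbed with room to spare. The substantive mathematical content is entirely the black-box application of Weil's bound; everything else is elementary algebra.
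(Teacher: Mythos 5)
Your proof is correct: the indicator identity $\tfrac{1+b\chi(y)}{2}$, the expansion over subsets, the Weil bound $\bigl|\sum_x \chi(g_S(x))\bigr|\le (|S|-1)\sqrt{p}$ for the squarefree $g_S$, and the careful handling of the at most $t$ points where some argument vanishes all check out, and your final bound $|N-p/2^t|<\tfrac{t}{2}(1+\sqrt{p})$ indeed lands inside the stated interval. The paper itself gives no proof, delegating it to \cite{peralta92}, and your argument is exactly the standard one carried out there, so there is nothing to reconcile.
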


The proposition implies that if $t < \log p /3$,
then every possible sequence in $\{-1, 1\}^t$
 occurs  as a consecutive sub-sequence in expression (\ref{complete}).
In many situations it is more convenient to use binary $0/1$ sequences,
which suggests instead using the polynomial $ (x^{(p-1)/2} + 1)/2 $,
but this results in a small problem at $x=0$.
We instead use the sparse polynomial $$ \alpha(x) = (x^{(p-1)/2} + x^{p-1})/2. $$
$\alpha (x)$
takes value in $\{0,1\}$ if $x\in \F_p$
and  $\alpha (x) =  1$ iff $\chi(x) = 1$.

\begin{corollary}\label{fromvectortoelement}
If $t < \log p/3$, then for any binary sequence
$ (b_1,b_2, \cdots, b_t) \in \{0,1\}^t$,
there exists a $x\in \F_p$ such that
$$\alpha(x) = b_1, \alpha(x+1) = b_2, \cdots, \alpha(x+t-1) = b_t. $$
\end{corollary}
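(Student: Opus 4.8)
The plan is to reduce this $0/1$ statement to the $\{1,-1\}$ statement already supplied by the preceding Proposition. Given a target pattern $(b_1,\dots,b_t)\in\{0,1\}^t$, encode it as the sign pattern $c_i := 2b_i-1\in\{1,-1\}$, so that $c_i=1$ exactly when $b_i=1$ and $c_i=-1$ exactly when $b_i=0$. Since $t<\log p/3$, the Proposition applies to $(c_1,\dots,c_t)$ and gives that the number of $x\in\F_p$ with $\chi(x)=c_1,\ \chi(x+1)=c_2,\ \dots,\ \chi(x+t-1)=c_t$ is at least $p/2^t - t(3+\sqrt p)$.

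Next I would check that every such $x$ already satisfies the desired conclusion $\alpha(x+i-1)=b_i$ for all $i$. Fix $i$ and set $y=x+i-1$. If $b_i=1$ then $\chi(y)=1$, and by the stated property of $\alpha$ (namely $\alpha(y)=1$ iff $\chi(y)=1$) we get $\alpha(y)=1=b_i$. If $b_i=0$ then $\chi(y)=-1$; in particular $y\neq 0$, hence by Fermat $y^{p-1}=1$ while $y^{(p-1)/2}=\chi(y)=-1$, so $\alpha(y)=(-1+1)/2=0=b_i$ (equivalently, $\alpha(y)\in\{0,1\}$ and $\alpha(y)\neq 1$, forcing $\alpha(y)=0$). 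This is exactly the point at which replacing the naive polynomial $(x^{(p-1)/2}+1)/2$ by $\alpha$ earns its keep: in the case $b_i=0$ the condition $\chi(y)=-1$ automatically keeps $y$ away from the problematic value $0$. Thus the set of $x$ realizing the $\alpha$-window $(b_1,\dots,b_t)$ contains the set of $x$ realizing the $\chi$-window $(c_1,\dots,c_t)$, so it has size at least $p/2^t-t(3+\sqrt p)$.

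Finally I would verify that this lower bound is strictly positive whenever $t<\log p/3$, which is where the constant $1/3$ is used: the hypothesis forces $2^t$ to be a fixed sub-linear power of $p$, so that $p/2^t$ is far larger than $\sqrt p$, whereas the error term is only $t(3+\sqrt p)=O(\sqrt p\,\log p)$; hence $p/2^t-t(3+\sqrt p)>0$ for all admissible $p$, with at most finitely many small primes requiring a direct (and trivial) check, since for small $t$ both the residues and non-residues of $\F_p$ are plentiful. Therefore such an $x$ exists, which proves the corollary. The only genuinely delicate step is the handling of $0$ in the second paragraph; the rest is a mechanical translation between the $\pm 1$ and $0/1$ formulations together with the elementary size estimate, and I do not anticipate further obstacles.
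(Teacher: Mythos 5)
Your proof is correct and follows the same route the paper intends: the corollary is stated there as an immediate consequence of the Weil-bound proposition, via exactly your $\pm 1 \leftrightarrow 0/1$ translation together with the observation that $\chi(y)=-1$ forces $y\neq 0$, so that $\alpha(y)=(-1+1)/2=0$. The one caveat you flag --- that the lower bound $p/2^t - t(3+\sqrt{p})$ is not actually positive for a handful of pairs $(t,p)$ with $2^{3t}<p$ (namely $t\le 4$ with $p$ just above $2^{3t}$), so those finitely many cases need a separate verification --- is a defect the paper itself silently shares, and your explicit acknowledgement of the needed finite check is if anything more careful than the original.
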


In other words, if $t < \log p/3$, the map
$$x \mapsto (\alpha(x), \alpha(x+1), \cdots, \alpha(x+t-1))$$
is an {\em onto} map from $\F_p$ to $\{0, 1\}^t  $; this map thus sends an algebraic object to a
combinatorial object.

Given a straight-line polynomial $f(x) \in \Z[x]$ and a prime $p$,
%the decision problem of finding a rational root
%of a straight-line polynomial modulo $p$
how hard is it to decide whether the polynomial has a solution in $\F_p$?
We now prove that this problem is NP-hard.

\begin{theorem}
Given a sparse shift polynomial
% straight-line program representing a polynomial
$f(x) \in \Z[x]$, and a large prime $p$, it is NP-hard to decide whether
$f(x)$ has a root in $\F_p$.
\end{theorem}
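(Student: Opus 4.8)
The plan is to give an RP-reduction from the (integer) subset sum problem \textsc{SSP}, which is \textsc{NP}-complete, essentially mirroring the reduction already used for \textsc{PFFSSP} but packaging the resulting linear congruence inside a single sparse shift polynomial by means of Corollary~\ref{fromvectortoelement}.

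\emph{The reduction.} Given an \textsc{SSP} instance with positive integers $S=\{a_1,\dots,a_t\}$ and target $b$, first dispose of the trivial case $b\notin\{0,1,\dots,\sum_{i=1}^t a_i\}$ by outputting a fixed no-instance. Otherwise, in randomized polynomial time produce a prime $p$ with $p>\max\{2^{3t},\sum_{i=1}^t a_i\}$; such a prime has $O(t+\log\sum_i a_i)$ bits, so this is polynomial in the input length. Then output the pair $(g,p)$, where
\[
 g(x)\;=\;\sum_{i=0}^{t-1} a_{i+1}\Bigl((x+i)^{(p-1)/2}+(x+i)^{p-1}\Bigr)\;-\;2b\;\in\;\Z[x],
\]
presented as the $2t+1$ triples $(a_{i+1},i,(p-1)/2)$ and $(a_{i+1},i,p-1)$ for $0\le i\le t-1$ together with $(-2b,0,0)$, which is a sparse shift representation. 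Note that $g=2\bigl(\sum_{i=0}^{t-1}a_{i+1}\alpha(x+i)-b\bigr)$ for the polynomial $\alpha(y)=(y^{(p-1)/2}+y^{p-1})/2$ introduced before Corollary~\ref{fromvectortoelement}, and since $p$ is odd, $g$ has a root in $\F_p$ iff some $x\in\F_p$ satisfies $\sum_{i=0}^{t-1}a_{i+1}\alpha(x+i)=b$ in $\F_p$. The reduction plainly runs in randomized polynomial time.

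\emph{Correctness.} The condition $p>\sum_i a_i$ is imposed so that, exactly as in the \textsc{PFFSSP} argument, a vector $(x_1,\dots,x_t)\in\{0,1\}^t$ with $\sum a_i x_i=b$ over $\Z$ exists iff one with $\sum a_i x_i\equiv b\pmod p$ does; the condition $p>2^{3t}$, i.e.\ $t<\log p/3$, is what allows us to invoke Corollary~\ref{fromvectortoelement}. For the forward direction, given $x_i\in\{0,1\}$ with $\sum a_i x_i=b$, the corollary supplies $x\in\F_p$ with $\alpha(x+i)=x_{i+1}$ for $0\le i\le t-1$, so $\sum_{i=0}^{t-1}a_{i+1}\alpha(x+i)$, computed in $\F_p$, is the residue of the integer $\sum a_i x_i=b\in[0,p)$, whence $g(x)=0$ in $\F_p$. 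Conversely, a root $x_0\in\F_p$ of $g$ yields values $x_{i+1}:=\alpha(x_0+i)\in\{0,1\}$ (recall $\alpha$ maps all of $\F_p$ into $\{0,1\}$) with $\sum a_i x_i\equiv b\pmod p$, hence $\sum a_i x_i=b$ over $\Z$ since both sides lie in $[0,p)$. Thus the output is a yes-instance iff the input is.

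\emph{Main obstacle.} The real mathematical content — surjectivity of $x\mapsto(\alpha(x),\dots,\alpha(x+t-1))$ onto $\{0,1\}^t$ — is already available as Corollary~\ref{fromvectortoelement}, so what remains is bookkeeping. The one point that needs genuine care is the passage between arithmetic in $\F_p$ (where $g$ is evaluated) and arithmetic in $\Z$ (where the subset sum lives): this is exactly why $p$ must be taken larger than $\sum_i a_i$ and why $b$ must first be normalized into $[0,\sum_i a_i]$. A secondary point is that the reduction is only randomized (RP), because producing a prime of the prescribed size deterministically is not known to be feasible, matching the status of the \textsc{PFFSSP} hardness result on which this argument rests.
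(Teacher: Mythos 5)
Your proof is correct and follows essentially the same route as the paper: an RP-reduction from \textsc{SSP} choosing a prime $p>\max(2^{3t},\sum_i a_i)$ and encoding the instance as the sparse shift polynomial $\sum_i a_{i+1}\alpha(x+i)-b$, with Corollary~\ref{fromvectortoelement} supplying the forward direction. Your version is in fact slightly more careful than the paper's — you clear the factor of $2$ so the polynomial genuinely lies in $\Z[x]$, normalize $b$ into $[0,\sum_i a_i]$, and spell out why the congruence mod $p$ lifts to an equality over $\Z$ — but these are refinements of the same argument, not a different one.
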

%\marginpar{Note that the selection of $p$ is not free (and thus cannot be given).  I modified the statement of the theorem to reflect this.}

\begin{proof}
We reduce the (decision version of the) subset sum problem to this problem.
Given
$b \in \Z_{\geq 0}$ and $S =  \{a_1, a_2, \cdots, a_t \} \subseteq \Z_{\geq 0}$,
one finds a prime
$p > \max(2^{3t}, \sum_{i=1}^t a_i) $ and constructs a
sparse shift polynomial
\begin{equation}
 \beta(x) = \sum_{i =0}^{t-1} a_i \alpha (x+i) - b.
\end{equation}
%It is easy to see that the length of a straight-line program computing
%the polynomial is bounded from above by $(t\log p)^{O(1)}$.
If the polynomial has a solution modulo $p$,
then the answer to the subset sum problem is ``yes'',
since for any $x\in \F_p$, $\alpha (x+i) \in \{0,1\}$.

In the other direction, if the answer to the  subset sum problem
is ``yes'', then according to Corollary~\ref{fromvectortoelement},
the polynomial has a solution in $\F_p$. Note that the reduction
can be computed in randomized polynomial time.
\end{proof}

\section{Complexity of the value set counting problem}

In this section, we prove several results about the
complexity of the  value set counting problem.

\subsection{Finite field extensions}

We will use a problem about $\NC^0_5$ circuits to prove that counting
the value set of a sparse polynomial in a binary field is \#P-hard.
A Boolean circuit is in $\NC^0_5$  if every output bit of the circuit
depends only on at most $5$ input bits. We can view a circuit with $n$
input bits and $m$ output bits as a map from $\{0,1\}^n $ to $\{0,1\}^m$
and call the image of the map the value set of the circuit.
The following proposition is implied in \cite{Durand94}.
We will sketch the proof for completeness.

\begin{proposition}\label{Durand}
Given a 3SAT formula with $n$ variables and $m$ clauses,
one can construct in polynomial time an $\NC^0_5$ circuit
with $n+m$ input bits and $n+m$ outputs bits, such
that if there are $M$  satisfying assignments for the
3SAT formula, then the cardinality of the value set of
the $\NC_5^0$ circuit is $2^{n+m} - 2^{m-1} M$.
%Note that there are totally $2^{n+m}$ possible inputs for the circuit.
In particular, if
the 3SAT formula can not be satisfied, then the circuit computes a
permutation from $\{0,1\}^{n+m}  $  to $\{0,1\}^{n+m}  $.
\end{proposition}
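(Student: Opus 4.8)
The plan is to construct the circuit explicitly and then reduce the value-set count to a rank computation over $\F_2$, one row per clause. Write the $3$SAT formula as $\phi = C_1 \wedge \cdots \wedge C_m$ in variables $y_1,\dots,y_n$, each $C_j$ involving at most three variables, and let $c_j(y)\in\{0,1\}$ be the truth value of $C_j$ on the assignment $y\in\{0,1\}^n$. Index the $n+m$ input bits as $(y_1,\dots,y_n,z_1,\dots,z_m)$ and define $F\colon\{0,1\}^{n+m}\to\{0,1\}^{n+m}$ to be the identity on the $y_i$ and, on the $z$-part,
\[
f_j \;=\; z_j \,\oplus\, \bigl(c_j(y)\wedge z_{j+1}\bigr),\qquad 1\le j\le m,
\]
with indices read cyclically, i.e.\ $z_{m+1}:=z_1$. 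Each $y$-output depends on one input bit; each $f_j$ depends on $z_j$, $z_{j+1}$, and the $\le 3$ variables feeding $C_j$, hence on $\le 5$ input bits. So $F$ is an $\NC^0_5$ circuit, plainly constructible from $\phi$ in polynomial time. The routine part here is the bookkeeping: confirming that the wrap-around term $c_m\wedge z_1$ does not break $5$-locality, and that the degenerate case $m=1$ (where $f_1=z_1\wedge\neg c_1$) still fits the pattern below.

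Next I analyze $\card{V_F}$ fiberwise over $y$. Since the first $n$ output coordinates reproduce $y$, the images of the slices $\{y\}\times\{0,1\}^m$ are pairwise disjoint, so $\card{V_F}=\sum_{y\in\{0,1\}^n}\card{G_y(\{0,1\}^m)}$, where $G_y\colon\{0,1\}^m\to\{0,1\}^m$ is $z\mapsto(f_1,\dots,f_m)$. For fixed $y$, writing $c_j=c_j(y)$, the map $G_y$ is $\F_2$-linear with matrix $A(c)$ equal to the identity plus superdiagonal entries $c_1,\dots,c_{m-1}$ and the corner entry $c_m$ in position $(m,1)$. Expanding the determinant, the only surviving permutations are the identity and the single $m$-cycle $i\mapsto i+1\pmod m$, so $\det A(c)=1+c_1c_2\cdots c_m$ over $\F_2$. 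Thus if some $c_j=0$ — i.e.\ $y$ does not satisfy $\phi$ — then $A(c)$ is invertible and $\card{G_y(\{0,1\}^m)}=2^m$; and if all $c_j=1$ — i.e.\ $y$ satisfies $\phi$ — then $A(c)=I+P$ with $P$ the cyclic-shift permutation matrix, whose $\F_2$-kernel is exactly the constant vectors (a single $m$-cycle fixes only constant vectors), so $\operatorname{rank} A(c) = m-1$ and $\card{G_y(\{0,1\}^m)}=2^{m-1}$.

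Summing over $y$, with $M$ the number of satisfying assignments of $\phi$, gives $\card{V_F}=(2^n-M)\,2^m+M\,2^{m-1}=2^{n+m}-2^{m-1}M$, which is the claimed formula; in particular, if $M=0$ then $\card{V_F}=2^{n+m}$, so $F$ is a surjection between sets of equal size and hence a permutation. The step I expect to be the real crux is the design of the construction itself — laying the $m$ clause-outputs out in a cyclic bidiagonal pattern so that simultaneously (i) every output is $5$-local, and (ii) the associated linear map degenerates by a factor of exactly two precisely when the conjunction $c_1\wedge\cdots\wedge c_m$ holds. Once that structure is in place the determinant and kernel computations are short, but it is exactly condition (ii) that pins the loss at $2^{m-1}$ per satisfying assignment rather than at some smaller power of two.
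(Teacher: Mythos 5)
Your construction is the same as the paper's (identity on the assignment variables, and on each clause bit output $z_j\oplus z_{j+1}$ when the clause is satisfied and $z_j$ otherwise, read cyclically), and your fiberwise count matches the paper's. The only difference is that you spell out the rank claim via the determinant $1+c_1\cdots c_m$ and the kernel of $I+P$, a verification the paper leaves to the reader; both computations are correct.
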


\begin{proof}
Denote the variables of the 3SAT formula by $x_1, x_2, \cdots, x_n$,
and the clauses of the 3SAT formula by $C_1, C_2, \cdots, C_m$.
Build a circuit with $n+m$ input bits and $n+m$ output bits as follows.
The input bits will be denoted by $x_1, x_2, \cdots, x_n, y_1, y_2, \cdots,
y_m$ and output bits will be denoted by $z_1, z_2, \cdots, z_n, w_1, w_2, \cdots,
w_m$. Set $z_i = x_i$ for $1\leq i \leq n$. And set
$$w_i = (C_i \wedge (y_i \oplus y_{(i+1 \pmod{m})})) \vee (\neg C_i \wedge y_i) $$
for $1\leq i\leq m$. In other words, if $C_i$ is evaluated to be TRUE,
then output $y_i \oplus y_{(i+1 \pmod{m})}$ as $w_i$,
and otherwise output $y_i $ as $w_i$. Note that $C_i$
depends only on $3$ variables
from $\{x_1,x_2, \cdots, x_n\}$, thus we obtain an $\NC_5^0$ circuit.
After fixing an assignment to $x_i$'s, $z_i$'s are also fixed, and
the transformation from
$(y_1, y_2, \cdots, y_m)$ to $(w_1, w_2, \cdots, w_m)$
is linear over $\F_2$.
One can verify that the linear transformation
has rank $m-1$ if the assignment satisfies all the clauses,
and it has rank $m$ (namely it is full rank)
if some of the clauses are not satisfied. So the cardinality
of the value set of the circuit is
$$ M 2^{m-1} + (2^n - M) 2^m = 2^{n+m} - 2^{m-1} M. $$
\end{proof}

If we replace the Boolean gates
in the $\NC^0_5$ circuit by  algebraic gates over $\F_2$, we obtain
an algebraic circuit that computes  a  polynomial map
from $\F_2^{n+m}$ to itself, where
 each polynomial depends only on
$5$ variables and has degree  equal to or less than $5$.
There is an $\F_2$-basis for $\F_{2^{n+m}}$, say $\omega_1, \omega_2, \cdots, \omega_{n+m}$
which induces a bijection from $\F_2^{n+m}  $ to $\F_{2^{n+m}}  $ given by
$$ (x_1,x_2,\cdots, x_{n+m}) \mapsto x= \sum_{i=1}^{n+m} x_i \omega_i  $$
which has an  inverse that can be represented by sparse polynomials
in $\F_{2^{n+m}}[x]$.
Using this fact,  we can replace the input bits of the algebraic
circuit by sparse polynomials,
and collect the output bits together using the base to form a single
element in $\F_{2^{n+m}}$. We thus obtain a sparse univariate polynomial
in $\F_{2^{n+m}}[x]$ from the $\NC_5^0$ circuit such that their value sets
have the same cardinality.  We thus have the following theorem:

\begin{theorem}\label{sparse}
Given a 3SAT formula with $n$ variables and $  m $ clauses,
one can construct in polynomial time a sparse polynomial
$\gamma(x)$ in $\F_{2^{n+m}}$
 such that the value set of $\gamma (x)$ has cardinality
$2^{n+m} - 2^{m-1} M$,
where $M$ is the number of satisfying assignments of the
3SAT formula.
\end{theorem}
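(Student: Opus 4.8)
The plan is to convert the $\NC^0_5$ Boolean circuit from Proposition~\ref{Durand} into an algebraic circuit over $\F_2$, and then to collapse that multivariate polynomial map into a single univariate sparse polynomial over the big field $\F_{2^{n+m}}$, in such a way that the cardinality of the value set is preserved exactly. The reduction will be polynomial time and will carry over the value set count $2^{n+m} - 2^{m-1}M$ verbatim from Proposition~\ref{Durand}.

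First I would replace each Boolean gate ($\wedge$, $\vee$, $\neg$, $\oplus$) by its algebraic counterpart over $\F_2$: $a \wedge b = ab$, $\neg a = 1 + a$, $a \oplus b = a + b$, and $a \vee b = a + b + ab$. Applying this to the circuit of Proposition~\ref{Durand} gives a polynomial map $\Phi \colon \F_2^{n+m} \to \F_2^{n+m}$ whose $i$-th coordinate depends on at most $5$ of the input variables and has total degree at most $5$; crucially, on the set $\{0,1\}^{n+m} = \F_2^{n+m}$ this map agrees pointwise with the Boolean circuit, so its value set has the same cardinality, namely $2^{n+m} - 2^{m-1}M$. Next I would fix an $\F_2$-basis $\omega_1,\dots,\omega_{n+m}$ of $\F_{2^{n+m}}$ over $\F_2$, giving the $\F_2$-linear bijection $\iota\colon (x_1,\dots,x_{n+m}) \mapsto \sum_i x_i\omega_i$ from $\F_2^{n+m}$ to $\F_{2^{n+m}}$. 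The key observation is that each coordinate extraction map $x \mapsto x_i$ (the $i$-th coordinate of $\iota^{-1}(x)$) is an $\F_2$-linear map $\F_{2^{n+m}} \to \F_2 \subseteq \F_{2^{n+m}}$, and every $\F_2$-linear map on $\F_{2^{n+m}}$ is given by a linearized (additive) polynomial $\sum_j c_j x^{2^j}$ with at most $n+m$ terms — hence sparse. So I can write $x_i = \pi_i(x)$ for an explicit sparse polynomial $\pi_i \in \F_{2^{n+m}}[x]$.

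Then I would form $\gamma(x) = \sum_{i=1}^{n+m} \omega_i \cdot \Phi_i\bigl(\pi_1(x),\dots,\pi_{n+m}(x)\bigr)$, where $\Phi_i$ is the $i$-th coordinate polynomial of $\Phi$. As $x$ ranges over $\F_{2^{n+m}}$, the tuple $(\pi_1(x),\dots,\pi_{n+m}(x)) = \iota^{-1}(x)$ ranges bijectively over $\F_2^{n+m}$, so the output $\gamma(x) = \iota\bigl(\Phi(\iota^{-1}(x))\bigr)$ ranges over $\iota(V_\Phi)$, a set of the same cardinality as the value set of $\Phi$, namely $2^{n+m} - 2^{m-1}M$. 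Because each $\Phi_i$ has only a bounded number of monomials each of degree at most $5$, and each $\pi_i$ is sparse, substituting and expanding produces only polynomially many monomials in $x$ (exponents can be reduced mod $2^{n+m}-1$), so $\gamma$ is genuinely sparse and computable in polynomial time; multiplying by the constants $\omega_i$ and summing keeps it sparse.

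The main obstacle — really the only delicate point — is controlling the sparsity of $\gamma$ after substitution: one must check that composing the constant-degree, bounded-support polynomials $\Phi_i$ with the sparse linearized polynomials $\pi_j$ does not cause a monomial blow-up. Since each $\Phi_i$ involves at most $\binom{5}{\le 5}$ monomials of degree $\le 5$ in at most $5$ of the $\pi_j$'s, and each $\pi_j$ has $\le n+m$ terms, each $\Phi_i(\pi_1,\dots,\pi_{n+m})$ expands to $O((n+m)^5)$ monomials before collecting like terms, so the total size of $\gamma$ is polynomial in $n+m$. Everything else is bookkeeping: confirming that the algebraic gates reproduce the Boolean gates on $\{0,1\}$, that $\iota$ and the $\pi_i$ are as claimed, and that the value-set cardinality is transported unchanged through the bijection $\iota$.
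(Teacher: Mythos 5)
Your proposal follows essentially the same route as the paper: arithmetize the $\NC^0_5$ circuit over $\F_2$, transport it to $\F_{2^{n+m}}$ via an $\F_2$-basis, and recover the input bits with sparse (linearized) coordinate-extraction polynomials before recombining the outputs. Your write-up is correct and in fact more explicit than the paper's own sketch about why the coordinate projections are sparse and why the composition causes no monomial blow-up.
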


Since counting the number of satisfying assignments for a 3SAT formula
is \#P-complete, we have our main theorem:

\begin{theorem}
The problem
of counting the value set of a sparse polynomial over a finite field of characteristic $p=2$
is \#P-hard.
\label{thm:binaryfieldvaluesetcounting}
\end{theorem}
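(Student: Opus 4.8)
The plan is to obtain this theorem as an essentially immediate corollary of Theorem~\ref{sparse} together with the classical fact that counting the satisfying assignments of a 3SAT formula (\#SAT) is \#P-complete; the latter is standard, following from the usual Cook--Levin style parsimonious reduction (cf.\ \cite{CormenLe01}). So the real work is all upstream, in Proposition~\ref{Durand} and in the passage from the Boolean $\NC^0_5$ circuit to a single sparse univariate polynomial, both of which are already established in the excerpt; granting those, the remaining argument is a short reduction.

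Concretely, I would argue as follows. Given a 3SAT instance $\phi$ with $n$ variables and $m$ clauses, invoke Theorem~\ref{sparse} to construct, in polynomial time, a sparse polynomial $\gamma(x)\in\F_{2^{n+m}}[x]$ whose value set satisfies
\[ \card{V_\gamma} = 2^{n+m} - 2^{m-1}M, \]
where $M$ is the number of satisfying assignments of $\phi$. Since the construction writes down $n$, $m$, and the field $\F_{2^{n+m}}$ explicitly, from the value $\card{V_\gamma}$ one recovers
\[ M = \frac{2^{n+m} - \card{V_\gamma}}{2^{m-1}} \]
by a single subtraction and division. Hence any algorithm (or oracle) that computes the cardinality of the value set of a sparse polynomial over a finite field of characteristic $2$ (with $m$ part of the input) yields, after polynomial-time post-processing, the number of satisfying assignments of $\phi$. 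This is a polynomial-time Turing reduction from \#SAT, so the value set counting problem is \#P-hard.

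I do not expect a genuine obstacle in this last step: the relation $\card{V_\gamma} = 2^{n+m} - 2^{m-1}M$ is exact rather than approximate, so no error analysis is needed, and one oracle call suffices. The only points worth checking carefully are that the instance produced by Theorem~\ref{sparse} has size polynomial in $n+m$ (true, since $\gamma$ is sparse and the field has degree $n+m$ over $\F_2$) and that the affine decoding of $M$ stays within polynomial size (true, since all quantities are bounded by $2^{n+m}$). If one prefers a many-one/parsimonious-flavored statement to a Turing reduction, the explicit affine dependence of $\card{V_\gamma}$ on $M$ already expresses the desired count as a simple function of $M$, which is all that \#P-hardness requires. Thus the genuine difficulty of the theorem is entirely absorbed into Proposition~\ref{Durand} (the rank-$(m-1)$-versus-$m$ dichotomy of the $\F_2$-linear map on the clause-indicator block) and the basis-plus-sparse-interpolation trick of Theorem~\ref{sparse}, both already in hand.
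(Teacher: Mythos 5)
Your proposal is correct and matches the paper's own argument: the paper likewise derives this theorem immediately from Theorem~\ref{sparse} and the \#P-completeness of counting 3SAT satisfying assignments, with the affine relation $\card{V_\gamma}=2^{n+m}-2^{m-1}M$ letting one recover $M$ from a single oracle call. The only difference is that you spell out the inversion $M=(2^{n+m}-\card{V_\gamma})/2^{m-1}$ and the polynomial-size checks explicitly, which the paper leaves implicit.
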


\subsection{Prime-order finite fields}

The construction in Theorem~\ref{sparse} relies on building
field extensions. The technique cannot be adopted easily
to the prime-order finite field case.
We will prove that counting the value set of a straight-line polynomial
over prime-order finite field is \#P-hard. We reduce the counting version
of subset sum problem
to the value set counting problem.

\begin{theorem}\label{primesharpP}
Given access to an oracle that solves the value set counting problem for
straight-line polynomials over prime-order finite fields,
  there is a randomized polynomial-time algorithm  solving the
counting version of the \textsc{SSP}.
\end{theorem}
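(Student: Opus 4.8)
The plan is to reduce the counting version of the \textsc{SSP} to the value set counting problem over prime-order finite fields, mirroring the decision-version reduction that was used to prove NP-hardness of root-finding for straight-line polynomials, but now tracking multiplicities. Given an \textsc{SSP} instance with target $b$ and set $S = \{a_1, \ldots, a_t\}$, I would first pick a prime $p > \max(2^{3t}, \sum_{i=1}^t a_i)$, which can be found in randomized polynomial time; this guarantees both that the map $x \mapsto (\alpha(x), \alpha(x+1), \ldots, \alpha(x+t-1))$ is onto $\{0,1\}^t$ by Corollary~\ref{fromvectortoelement}, and that no ``wrap-around'' modulo $p$ can occur among subset sums, so a subset-sum solution modulo $p$ is a genuine subset-sum solution over $\Z$. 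Then form the straight-line polynomial $\beta(x) = \sum_{i=0}^{t-1} a_{i+1}\alpha(x+i) - b \in \F_p[x]$, which has a short straight-line program since $\alpha(x) = (x^{(p-1)/2} + x^{p-1})/2$ does (repeated squaring) and shifts and scalings are cheap.

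The key observation is that counting the value set of $\beta$ is essentially counting the number of \emph{distinct} attainable values of $\sum_{i=0}^{t-1} a_{i+1} x_{i+1} - b$ as $(x_1,\ldots,x_t)$ ranges over $\{0,1\}^t$, because the shift map is onto. So $\card{V_\beta}$ equals the number of distinct integers of the form $\left(\sum_{i\in T} a_i\right) - b$ over subsets $T \subseteq S$ — that is, the number of distinct subset sums of $S$. This gives us one linear equation relating an oracle answer to the quantity ``number of distinct subset sums,'' which is not yet the counting answer $M$ = number of subsets summing to exactly $b$. To extract $M$ itself I would introduce a parameter: for each candidate we want to \emph{isolate} the fiber over $b$. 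One clean way is to append extra elements or to use several primes/targets so that the number of distinct values, as a function of a shift or of added ``dummy'' variables, changes by a controlled amount exactly when the fiber over $0$ (after translation) is hit; more directly, I expect to run the reduction on a modified instance whose set of subset sums has all sums distinct \emph{except} possibly collisions forced at the target, so that $\card{V}$ drops from $2^t$ by exactly (number of colliding subsets $-1$), letting us solve for $M$. Concretely, replacing $a_i$ by $a_i' = 2a_i + 2^i$ (say) makes all subset sums distinct; then comparing the value-set count of the polynomial built from $\{a_i'\}$ with that of a polynomial built from $\{a_i'\}$ together with one more generator tuned so that its only possible coincidences encode ``$\sum_{i\in T} a_i = b$'' yields $M$ via a short sequence of oracle calls and an inclusion–exclusion-style subtraction.

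The main obstacle is precisely this last step: a single value-set-count query returns only the \emph{number of distinct} attainable sums, and turning that into the \emph{multiplicity} $M$ of one particular value requires a gadget that forces all collisions to be ``about $b$'' and of a known combinatorial shape, so that the deficit $2^t - \card{V}$ is an invertible linear function of $M$ (or of a few such quantities over several related instances). I would handle this by the standard technique of perturbing the weights to make generic sums distinct while planting, with an auxiliary variable, a family of equalities in bijection with the solution set of the original \textsc{SSP}; then $\card{V}$ on the planted instance is $2^{t+1} - (\text{something linear in }M)$, and subtracting the unplanted count recovers $M$. All arithmetic fits in $O(\log p) = \mathrm{poly}(t, \log \max a_i)$ bits, the polynomials involved all have short straight-line programs, and only polynomially many oracle calls are used, so the whole reduction is randomized polynomial time, giving the claimed theorem; combined with the \#P-completeness of the counting \textsc{SSP} this yields Theorem~\ref{thm:primeorderfieldvaluesetcounting}.
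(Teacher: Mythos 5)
There is a genuine gap, and you have correctly located it yourself: your reduction never actually extracts the multiplicity $M$. Querying the value set of $\beta(x)=\sum_{i=0}^{t-1}a_{i+1}\alpha(x+i)-b$ only yields the number of \emph{distinct} subset sums of $S$, which is a quantity essentially unrelated to the number of subsets summing to $b$. The ``perturb and plant'' gadget you sketch to bridge this is not constructed: the specific perturbation $a_i'=2a_i+2^i$ does not even guarantee distinct subset sums, and more seriously, once you perturb the weights to separate generic sums, a single appended generator cannot create a family of collisions in bijection with the solution set of the \emph{original} instance, because any collision condition $\sum_{T_1}a_i''=\sum_{T_2}a_i''+a_{t+1}''$ involves the perturbation terms and hence depends on the identity of $T_1,T_2$, not merely on $\sum_{T}a_i$. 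As written, the argument stops exactly at the step that carries all the difficulty.

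The paper's proof avoids this obstacle entirely by changing \emph{which} polynomial is handed to the oracle. Instead of $\beta$ itself, it queries
\begin{equation*}
f(x)=\bigl(1-\beta(x)^{p-1}\bigr)\left(\sum_{i=0}^{t-1}\alpha(x+i)2^{i}\right),
\end{equation*}
with $p>\max(2^{3t},2\sum_i a_i)$. The Fermat factor $1-\beta(x)^{p-1}$ is the indicator of $\beta(x)=0$, i.e.\ of ``the subset encoded by $x$ sums to $b$,'' and the second factor assigns each subset its distinct binary label in $\{0,1,\dots,2^{t}-1\}$ (well-defined and collision-free since $p>2^{3t}$). Hence $f$ maps every non-solution to $0$ and every solution subset to its own nonzero label (the degenerate cases $b=0$ and $b>\sum_i a_i$ are handled separately), so $\card{V_f}=M+1$ and a single oracle call suffices. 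This indicator-times-label construction is the key idea missing from your proposal; without it, or a fully worked-out collision gadget, the reduction is incomplete.
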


\begin{proof}
Given an instance of the counting subset sum problem, $b$ and
$S = \{ a_1, a_2, \cdots, a_n\} $,
if $ b > \sum_{i=1}^n a_i $, we answer $0$; if $b = 0$, then we answer $1$.
Otherwise  we find a prime $p > \max(2^{3t}, 2\sum_{i=1}^n a_i)  $
and ask the oracle to count the value set of the shift sparse polynomial
  $$ f(x):= (1- \beta(x)^{p-1} )  (\sum_{i=0}^{t-1}\alpha(x+i)2^i)$$
  over the prime-order field $\F_p$. We
output the answer $\card{V_f} -1$, which is easily seen to be exactly the number of
subsets of $\{ a_1, \cdots, a_n\}$ which sum to $b$.
% or the answer minus 1, depending on $b=0$
% or not.
\end{proof}

Since the counting version of the \textsc{SSP} is \#P-complete, this theorem yields
\begin{theorem}
Over a prime-order finite field $\F_p$,  the problem of counting the value set is \#P-hard under RP-reduction,
if the polynomial is given as a straight-line program.
\label{thm:primeorderfieldvaluesetcounting}
\end{theorem}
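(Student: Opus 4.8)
The plan is to deduce Theorem~\ref{thm:primeorderfieldvaluesetcounting} from Theorem~\ref{primesharpP}. The first observation is structural: a counting problem is \#P-hard under RP-reductions as soon as some \#P-complete problem reduces to it by a randomized polynomial-time (Turing) reduction, since such reductions compose with ordinary deterministic polynomial-time reductions. So it is enough to exhibit one RP-reduction from a fixed \#P-complete problem to the value set counting problem for straight-line polynomials over prime-order fields. I would take that \#P-complete problem to be the counting version of the \textsc{SSP}; its \#P-completeness follows from the standard NP-completeness proof of \textsc{SSP} (see \cite[Theorem 34.15]{CormenLe01}). The desired RP-reduction is then exactly the algorithm of Theorem~\ref{primesharpP}, so the remaining work is to be sure that algorithm does what is claimed.

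Two things need checking. First, the object passed to the oracle must be a genuinely short straight-line program over $\F_p$: in $f(x)=(1-\beta(x)^{p-1})(\sum_{i=0}^{t-1}\alpha(x+i)2^i)$ the powers $x^{(p-1)/2}$, $x^{p-1}$ and $\beta(x)^{p-1}$ are obtained by repeated squaring in $O(\log p)=O(t)$ multiplications, the shifts $x+i$ by additions, and the scalar $2^{-1}\bmod p$ together with the integer coefficients $a_i,2^i,b$ by doubling-and-adding, so the program has length polynomial in $t+\sum_i\log a_i+\log b$; the only randomized step is producing a prime $p>\max(2^{3t},2\sum_i a_i)$. Second --- and this is the real content behind Theorem~\ref{primesharpP} --- I would verify $\card{V_f}=1+\#\{T\subseteq S:\sum_{a\in T}a=b\}$ in the non-degenerate case $0<b\le\sum_i a_i$, so that the reported value $\card{V_f}-1$ is correct.

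To verify the cardinality identity, evaluate $f$ at $x\in\F_p$ and put $v_i=\alpha(x+i)\in\{0,1\}$, so that $\beta(x)=\sum_i a_{i+1}v_i-b$. By Fermat's little theorem $1-\beta(x)^{p-1}$ equals $1$ if $\beta(x)=0$ in $\F_p$ and $0$ otherwise, so $f(x)=\sum_i v_i2^i$ when $\sum_i a_{i+1}v_i\equiv b\pmod p$ and $f(x)=0$ otherwise. Since $0\le\sum_i a_{i+1}v_i\le\sum_i a_i$, $0\le b\le\sum_i a_i$ and $p>2\sum_i a_i$, the congruence is equivalent to the integer identity $\sum_i a_{i+1}v_i=b$, i.e.\ to $\{a_{i+1}:v_i=1\}$ being a subset of $S$ summing to $b$. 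By Corollary~\ref{fromvectortoelement}, which applies because $t<\log p/3$, every vector $(v_0,\dots,v_{t-1})\in\{0,1\}^t$ is realized by some $x$; and because $2^t<p$ the integers $\sum_i v_i2^i$ represent distinct elements of $\F_p$, nonzero for nonzero vectors. Since $b>0$, the all-zero vector is not a valid subset, so each subset of $S$ summing to $b$ contributes its own distinct nonzero element to $V_f$, while $0\in V_f$ because the $x$ realizing the all-zero vector has $\beta(x)=-b\ne0$. This gives the identity, and combined with the reduction from the first paragraph proves the theorem.

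Given Theorem~\ref{primesharpP}, there is essentially no obstacle: the final step is just the composition of reductions together with the \#P-completeness of counting-\textsc{SSP}. If one wants to build the argument from scratch, the only delicate points are that the surjectivity of $x\mapsto(\alpha(x),\dots,\alpha(x+t-1))$ onto $\{0,1\}^t$ genuinely relies on the Weil-bound estimate behind Corollary~\ref{fromvectortoelement} (hence on $p>2^{3t}$), and that the two roles of the size of $p$ must be kept separate --- large enough ($>2^{3t}$) for that surjectivity, and large enough ($>2\sum_i a_i$) so that the equation modulo $p$ faithfully encodes the integer subset-sum equation.
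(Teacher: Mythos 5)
Your proposal follows the same route as the paper: the theorem is obtained by composing the \#P-completeness of the counting version of the \textsc{SSP} with the randomized oracle reduction of Theorem~\ref{primesharpP}. You additionally spell out the details the paper leaves as ``easily seen'' --- the straight-line-program length and the identity $\card{V_f}=1+M$ via the two size conditions on $p$ --- and these verifications are correct.
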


\section{The Image Set and Point Counting}

\begin{proposition}
If $f \in \mathbb{F}_q \left[ x \right]$ is a polynomial of degree $d>0$, then the cardinality of its image set is
\begin{equation}
\card{V_f} = \sum_{i=1}^d (-1)^{i-1} N_i \sigma_{i} \left( 1, \frac{1}{2}, \ldots, \frac{1}{d} \right)
\label{eq:pointsinimage}
\end{equation}
where $N_k = \card{\left\{ (x_1, \ldots, x_k) \in \mathbb{F}_q^k \mid f(x_1) = \cdots = f(x_k) \right\} }$ and $\sigma_{i}$ denotes the $i$th elementary symmetric function on $d$ elements.
\label{prop:pointstocard}
\end{proposition}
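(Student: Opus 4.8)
The plan is to pass from the value set to preimage counts. For each $a \in \F_q$ put $r_a = \card{\{x \in \F_q : f(x) = a\}}$; since $f(x)-a$ has degree $d>0$ we have $1 \le r_a \le d$ when $a \in V_f$ and $r_a = 0$ otherwise. Counting ordered $k$-tuples by their common image gives
\[
 N_k \;=\; \sum_{a \in \F_q} r_a^{\,k} \;=\; \sum_{a \in V_f} r_a^{\,k}.
\]
Substituting this into the right-hand side of \eqref{eq:pointsinimage} and interchanging the two (finite) sums, the proposition reduces to the purely combinatorial identity that for every integer $r$ with $1 \le r \le d$,
\[
 \sum_{i=1}^{d} (-1)^{i-1} r^{\,i}\, \sigma_i\!\left(1,\tfrac12,\ldots,\tfrac1d\right) \;=\; 1 ;
\]
granting this, the double sum collapses to $\sum_{a \in V_f} 1 = \card{V_f}$.

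To prove the identity, I would use the generating function
\[
 \prod_{j=1}^{d}\left(1 + \tfrac{t}{j}\right) \;=\; \sum_{i=0}^{d} \sigma_i\!\left(1,\tfrac12,\ldots,\tfrac1d\right) t^{\,i},
\]
which is the standard expansion of elementary symmetric functions. Evaluating at $t = -r$ yields
\[
 \sum_{i=0}^{d} (-1)^{i} r^{\,i}\, \sigma_i \;=\; \prod_{j=1}^{d}\left(1 - \tfrac{r}{j}\right) \;=\; \frac{1}{d!}\prod_{j=1}^{d}(j - r).
\]
Because $r$ is an integer lying in $\{1,\ldots,d\}$, the factor with $j = r$ vanishes, so the product is $0$; separating the $i=0$ term ($\sigma_0 = 1$) then gives $1 + \sum_{i=1}^{d} (-1)^{i} r^{\,i}\sigma_i = 0$, which rearranges to the claimed identity. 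This is an identity in $\Q$, and combined with the reduction above it proves \eqref{eq:pointsinimage}.

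There is no substantive obstacle here; the computation is short. The only points needing care are the elementary observations that $r_a \le d$ for all $a$ (so that only $\sigma_i$ with $i \le d$ occur, and the worst case $r=d$ is still covered by the vanishing product) and that $r_a \ge 1$ precisely on $V_f$ — both immediate from $\deg f = d > 0$ — together with getting the signs in the passage between $\prod_j(1+t/j)$ at $t=-r$ and the alternating sum correct.
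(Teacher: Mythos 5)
Your proof is correct and follows essentially the same route as the paper: decompose $N_k$ as $\sum_{a\in V_f} r_a^k$ over the fibers, reduce to the identity $\sum_{i=1}^d(-1)^{i-1}r^i\sigma_i = 1$ for integers $1\le r\le d$, and establish that identity by evaluating the elementary-symmetric generating product $\prod_{j=1}^d(1-r/j)$, which vanishes at the factor $j=r$. The only cosmetic difference is that the paper writes the product identity as $\prod_j(\lambda-X_j)=\sum_j(-1)^j\lambda^{n-j}\sigma_j$ with $\lambda=1$, $X_j=r/j$, which is the same computation.
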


\begin{proof}
For any $y \in V_f$, define
\[\tilde{N}_{k,y} = \left\{ \left( x_1, \ldots, x_k \right) \in \F_q^k \mid f(x_1) = \cdots = f(x_k) = y \right\}\]
and denote the corresponding cardinality of these sets as
\[N_{k,y} = \card{ \tilde{N}_{k,y} } \]
and finally, note that
\begin{equation}
N_k = \sum_{y \in V_f} N_{k,y}
\label{eqn:newNk}
\end{equation}

Let us refer to the right hand side of (\ref{eq:pointsinimage}) as $\eta$; plugging (\ref{eqn:newNk}) into this expression and rearranging, we get
\begin{equation*}
\eta = \sum_{y \in V_f} \sum_{i=1}^d (-1)^{i-1} N_{i,y} \, \sigma_{i} \left( 1, \frac{1}{2}, \ldots, \frac{1}{d} \right).
\end{equation*}

Let us call the inner sum $\omega_y$, that is:
\[\omega_y = \sum_{i=1}^d (-1)^{i-1} N_{i,y} \, \sigma_{i} \left( 1, \frac{1}{2}, \ldots, \frac{1}{d} \right).\]

If we can show that for all $y \in V_f$ we have $\omega_y = 1$, then we clearly have $\eta = \card{V_f}$.

Let $y \in V_f$ be fixed.  Let $k=\card{f^{-1}(y)}$.  It is clear that $1\leq k \leq d$ and $N_{i,y} = k^i$ for $0\leq i \leq d$.
Substituting this in, our expression mercifully becomes somewhat nicer:

\begin{align}
\nonumber \omega_y &= 1 - \sum_{i=0}^d (-1)^{i} k^i \sigma_{i} \left( 1, \frac{1}{2}, \ldots, \frac{1}{d} \right)\\
\label{eqn:precancelterm} &= 1 - \sum_{i=0}^d (-1)^{i} \sigma_{i} \left( k 1, k \frac{1}{2}, \ldots, k \frac{1}{d} \right)\\
\label{eqn:cancelterm} &= 1 - \left[ \left( 1 - k 1 \right) \left( 1 - k \frac{1}{2} \right) \cdots \left( 1 - k \frac{1}{d} \right) \right]\\
\nonumber &= 1.
\end{align}

From step (\ref{eqn:precancelterm}) to step (\ref{eqn:cancelterm}), we are using the identity
\[\prod_{j=1}^n \left( \lambda - X_j \right) = \sum_{j=0}^n \left(-1 \right)^j \lambda^{n-j} \sigma_j \left(X_1, \ldots, X_n \right). \]

Note that the bracketed term of (\ref{eqn:cancelterm}) is $0$, as $k$ must be an integer such that $1 \leq k \leq d$, so one term in the product will be $0$.

Thus, we have $\eta = \card{V_f}$, as desired.
\end{proof}

Proposition \ref{prop:pointstocard} gives us a way to express $\card{V_f}$ in terms of the numbers of rational points on a sequence of curves over $\F_q$.
If we had a way of getting $N_k$ for $1 \leq k \leq d$, then it would be easy to calculate $\card{V_f}$.

The spaces $\tilde{N}_k$ aren't of any nice form (in particular, we cannot assume they are non-singular projective, abelian varieties, etc.), so we proceed by using the $p$-adic point counting method described in \cite{LauderWan2008countingpoints}, which works for any variety over a field of small characteristic (i.e., $p = O((d \log q)^C)$ for some positive constant $C$).

\begin{theorem}
There exists an explicit deterministic algorithm and an explicit polynomial $R$ such that for any $f \in \F_q [x]$ of degree $d$, where $q = p^m$ ($p$ prime), the algorithm computes the cardinality of the image set, $\card{V_f}$, in a number of bit operations bounded by $R \left( m^d d^d p^d \right)$.
\label{thm:imagepointcounting}
\end{theorem}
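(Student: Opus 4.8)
The strategy is to reduce, via Proposition~\ref{prop:pointstocard}, to a handful of affine point counts, and then to invoke the Lauder--Wan $p$-adic algorithm for each of them. By identity~(\ref{eq:pointsinimage}), once the integers $N_1, N_2, \ldots, N_d$ are known, $\card{V_f}$ is obtained by the single rational-arithmetic evaluation $\sum_{i=1}^{d} (-1)^{i-1} N_i\, \sigma_i\!\left(1, \tfrac12, \ldots, \tfrac1d\right)$. The coefficients here are cheap to produce exactly: multiplying $\prod_{j=1}^{d}(\lambda - \tfrac1j)$ by $d!$ gives the integer polynomial $\prod_{j=1}^{d}(j\lambda - 1)$, so each $\sigma_i\!\left(1,\tfrac12,\ldots,\tfrac1d\right)$ is a rational number with denominator dividing $d!$ and with numerator and denominator bounded by $d^{O(d)}$; all of them can be computed in time polynomial in $d$ by expanding that product, and the final linear combination then costs $(md)^{O(1)}$ bit operations once the $N_i$ are in hand. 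Thus it suffices to compute the $d$ numbers $N_1, \ldots, N_d$.

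Next I would present each $N_k$ as a point count on an explicit affine variety. For $1 \le k \le d$, let $V_k \subseteq \mathbb{A}^k$ be the variety over $\F_q$ defined by the $k-1$ equations
\[ f(x_1) - f(x_2) = f(x_2) - f(x_3) = \cdots = f(x_{k-1}) - f(x_k) = 0 ; \]
then $N_k = \# V_k(\F_q)$ directly from the definition of $N_k$ in Proposition~\ref{prop:pointstocard}. Each defining polynomial has degree at most $d$, the ambient dimension $k$ is at most $d$, and the number of equations is at most $d-1$. These varieties need not be smooth, irreducible, equidimensional, or complete intersections, so a point-counting routine valid for arbitrary affine varieties is needed.

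I would then apply the Lauder--Wan algorithm \cite{LauderWan2008countingpoints}: given an affine variety over $\F_q$ with $q = p^m$, cut out by a bounded number of polynomials in $n$ variables of degree at most $\delta$, it computes the associated zeta function --- and in particular the number of $\F_q$-rational points --- in a number of bit operations bounded by $(p\,m\,\delta)^{O(n)}$. Taking $n = k \le d$ and $\delta = d$ produces each $N_k$ in $(p\,m\,d)^{O(d)}$ bit operations, and running this for every $k \in \{1, \ldots, d\}$ and then combining as above keeps the total within $(p\,m\,d)^{O(d)}$ bit operations. Since $(p\,m\,d)^{O(d)} = \left( m^d d^d p^d \right)^{O(1)}$, this is $R\!\left(m^d d^d p^d\right)$ for an explicit polynomial $R$, whose degree and coefficients are obtained by making the constants in the Lauder--Wan bound explicit.

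The main obstacle is precisely this last step: one must extract from \cite{LauderWan2008countingpoints} a clean, explicit complexity statement that applies to our possibly-singular, multi-equation varieties $V_k$ --- rather than merely to smooth hypersurfaces or to isolated exponential sums --- and track every intermediate quantity (the required $p$-adic working precision, the dimension of the Dwork-type cohomology attached to $V_k$, and the cost of the $\sigma$-linear algebra over $\Z / p^N \Z$) so as to exhibit the explicit polynomial $R$. Everything else --- the reduction through Proposition~\ref{prop:pointstocard} and the exact manipulation of the symmetric functions $\sigma_i$ --- is elementary and contributes only polynomially in $m$ and $d$.
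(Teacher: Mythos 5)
Your overall strategy is the same as the paper's: use Proposition~\ref{prop:pointstocard} to reduce $\card{V_f}$ to the $d$ point counts $N_1,\ldots,N_d$, compute the rational coefficients $\sigma_i\bigl(1,\tfrac12,\ldots,\tfrac1d\bigr)$ exactly at negligible cost, and obtain each $N_k$ from the Lauder--Wan $p$-adic algorithm. The one place where you black-box something is exactly the place where the paper does its only real work in this proof. You hand Lauder--Wan the variety $V_k$ cut out by the $k-1$ equations $f(x_i)-f(x_{i+1})=0$ and assert that their algorithm accepts ``arbitrary affine varieties cut out by a bounded number of polynomials'' with cost $(pm\delta)^{O(n)}$. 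Two problems: first, the number of defining polynomials here is $k-1\le d-1$, which is \emph{not} bounded, so the complexity's dependence on the number of equations actually matters and must be tracked; second, the point-counting bound that the paper extracts from \cite{LauderWan2008countingpoints} is parameterized by the degree and number of variables of a \emph{single} defining polynomial, so some reduction from the system to a hypersurface is needed before the cited complexity applies.

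The paper fills this gap with the standard auxiliary-variable linearization: it introduces $z_1,\ldots,z_{k-1}$ and counts points on the single hypersurface $F_k(x,z)=\sum_{i=1}^{k-1} z_i\bigl(f(x_1)-f(x_{i+1})\bigr)=0$, of degree $d+1$ in $2k-1$ variables. Over a point of $\tilde N_k$ the fiber in $z$ is all of $\F_q^{k-1}$, and over any other point it is a hyperplane of size $q^{k-2}$, which gives the exact recovery formula $N_k=\bigl(\card{F_k}-q^{2k-2}\bigr)/\bigl(q^{k-2}(q-1)\bigr)$; the hypersurface bound in $2k-1\le 2d-1$ variables then yields $(pmd)^{O(d)}$ as you want. (An inclusion--exclusion over products of subsets of the defining polynomials would also work here, at the price of a tolerable $2^{k-1}$ factor and degree blow-up to about $d^2$.) So your proof is not wrong in spirit, but as written it defers the decisive step to a version of the cited theorem that you have not verified exists in the form you need; adding the $F_k$ construction, or an equivalent explicit reduction to hypersurfaces with a complexity bound of the shape $(pmd)^{O(k)}$, closes the gap and recovers the paper's argument.
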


\begin{proof}
Recall that $N_k = \card{\tilde{N}_k}$ with
\begin{align*}
\tilde{N}_k & = \left\{ \left( x_1, \ldots, x_k \right) \in \F_q^k \mid f(x_1) = \cdots = f(x_k) \right\} \\
 &= \left\{ \left( x_1, \ldots, x_k \right) \in \F_q^k \left| \begin{array}{ccc}
    f(x_1) &- f(x_2) &= 0\\
    f(x_1) &- f(x_3) &= 0\\
    & \vdots &\\
    f(x_1) &- f(x_k) &= 0
    \end{array} \right. \right\}.
\end{align*}

For reasons soon to become clear, we need to represent this as the solution set of a single polynomial. Let us introduce additional variables $z_1$ to $z_{k-1}$, and denote $x = \left( x_1, \ldots, x_k \right)$ and $z = \left( z_1, \ldots, z_{k-1} \right)$.  Now examine the auxiliary function
\begin{equation}
F_k \left(x, z \right) = z_1 \left( f(x_1) - f(x_2) \right) + \cdots + z_{k-1} \left( f(x_1) - f(x_{k}) \right).
\label{eqn:auxeqn}
\end{equation}

Clearly, if $\gamma \in \tilde{N}_k$, then $F_k \left( \gamma, z \right)$ is the zero function.  If $\gamma \in \F_q^k \setminus \tilde{N}_k$, then the solutions of $F_k \left( \gamma, z \right) =0$ specify a $\left(k-2\right)$-dimensional $\F_q$-linear subspace of $\F_q^{k-1}$.
Thus, if we denote the cardinality of the solution set to $F_k(x,z) = 0$ as $\card{F_k}$, then we see that
\begin{align*}
\card{F_k} &= q^{k-1} N_{k} + q^{k-2} \left( q^k - N_k \right) \\
& = N_k q^{k-2} \left( q - 1 \right) + q^{2 k-2}.
\end{align*}

Solving for $N_k$, we find that
\begin{equation}
N_k = \frac{\card{F_k} - q^{2 k-2}}{q^{k-2} \left( q - 1 \right)}.
\label{eqn:Nkintermsofaux}
\end{equation}
Thus we have an easy way to determine what $N_k$ is depending on the number of points on this hypersurface defined by the single polynomial equation
$F_k=0$.

The main theorem in \cite{LauderWan2008countingpoints} yields an algorithm for toric point counting in $\F_{q^\ell}$ for small characteristic (i.e., $p = O((d \log q)^C)$ for some positive constant $C$) that works for general varieties.  In \cite[\S 6.4]{LauderWan2008countingpoints}, this theorem is adapted to be a generic point counting algorithm.

Adapting this result to our problem, we see that $F_k$ has a total degree of $d+1$, is in $2 k -1$ variables, and that we only care about the case where $\ell = 1$. Thus, the runtime for this algorithm is $\tilde{O} ( 2^{8k + 1} m^{6k + 4} k^{6 k + 2} d^{6 k - 3} p^{4 k + 2} )$ bit operations. In order to calculate $\card{V_f}$ using equation (\ref{eq:pointsinimage}), we calculate $N_k$ for $1 \leq k \leq d$, scaled by an elementary symmetric polynomial.  All of the necessary elementary symmetric polynomials can be evaluated using Newton's identity (see \cite{mead1992}) in less than 
$O(d^2 \log d)$ multiplications.  As such, the entire calculation has a runtime of $\tilde{O} ( 2^{8d + 1} m^{6d + 4} d^{12 d - 1} p^{4 d + 2} )$ bit operations.  For consistency with \cite{LauderWan2008countingpoints}, we can then note that as $d>1$, we can write $2^{8d + 1} = d^{\left( \log_d 2 \right) \left( 8d + 1 \right)}$.  Thus, there is a polynomial,  $R$, in one variable such that the runtime of this algorithm is bounded by $R ( m^d d^d p^d )$ bit operations.  In the dense polynomial model, the polynomial $f$ has input size $O \left( d \log q \right)$, so this algorithm does not have polynomial runtime with respect to the input length.  This algorithm has runtime that is exponential in the degree of the polynomial, $d$, and polynomial in $m$ and $p$.
\end{proof}

\section{Open Problems}

Though  value sets of polynomials appear to be closely related to zero sets,
they are not as well-studied. There are many interesting open problems about
value sets. The most important one is to find a counting algorithm with
running time $(d\log q)^{O(1)}$, that is, a deterministic polynomial time algorithm in the dense model.  It is not clear if this is always possible.
Our result affirmatively solves this problem for fixed $d$
if characteristic $p$ is reasonably small. We conjecture that the same result is true for fixed $d$
and all characteristic $p$.

For the complexity side, can one prove that the counting problem for sparse polynomials
in prime-order finite fields is hard? Can one prove that the counting problem for dense input
model is hard for general degree $d$?
%As far as we know,
%The reduction is unique in the sense that it is not parsimonious.
%An interesting open problem would be to
%find a reduction which is deterministic and parsimonious.

\paragraph{Acknowledgment:} We thank Dr. Tsuyoshi Ito for pointing
out the reference \cite{Durand94} to us.

\bibliographystyle{plain}
\bibliography{vs}

\end{document}